\newtheorem*{theorem*}{Theorem}
\newtheorem{lemma}{Lemma}
\newtheorem*{cor*}{Corollary}
\theoremstyle{definition}
\newtheorem*{defin*}{Definition}
\newcommand{\Z}{\mathbb{Z}}
\newcommand{\ord}{\mathop{\mathrm{ord}}\nolimits}
\newcommand{\Sym}{\mathop{\mathrm{Sym}}\nolimits}
\author{Alexander A. Gaifullin}
\thanks{This work was supported by the Russian Science Foundation under grant no. 23-11-00143, \texttt{https:/\!/rscf.ru/project/23-11-00143/} }
\title{Commutativity of involutive two-valued groups}
\date{}
\address{Steklov Mathematical Institute of Russian Academy of Sciences, Moscow, Russia}
\address{Skolkovo Institute of Science and Technology, Moscow, Russia}
\address{Lomonosov Moscow State University, Moscow, Russia}
\address{Institute for the Information Transmission Problems of the Russian Academy of Sciences (Kharkevich Institute), Moscow, Russia}
\email{agaif@mi-ras.ru}
\subjclass{20N99; 08A05}
\begin{document}
\maketitle

Over the past decades, theory of $n$-valued groups has been enriched with numerous new results and applications, see~\cite{Buc06,BV19}. One of its features is the complexity of classification problems even in the commutative case. In~\cite{BV19}, Buchstaber and Veselov, in connection with the Conway topograph, introduced a class of \textit{involutive two-valued groups} for which the classification problems seem amenable, see~\cite{BVG22}. Following~\cite{BV19,BVG22}, we give the following definition.

\begin{defin*}
 An \textit{involutive two-valued group} is a set~$X$ endowed with a multiplication $*\colon X\times X\to\Sym^2(X)$ (where $\Sym^2(X)$ is the symmetric square of~$X$) and an identity element $e\in X$ that satisfy the following properties:

 (1) (\textit{associativity}) for any $x,y,z\in X$, there is an equality of $4$-element multisets $(x*y)*z=x*(y*z)$,

 (2) (\textit{strong identity}) $x*e=e*x=[x,x]$ for all $x\in X$,

 (3) (\textit{involutivity}) the multiset $x*y$ contains the identity~$e$ if and only if $x=y$.

 A two-valued group is said to be \textit{commutative} if it in addition satisfies  the following property:

 (4) (\textit{commutativity}) $x*y=y*x$ for all $x,y\in X$.
\end{defin*}

Note that in~\cite{BVEP} the concept of an involutive $n$-valued group is used in another, not equivalent meaning, see Remark~1.5 in~\cite{BVG22} for details.

In~\cite{BVG22}, a complete classification of finitely generated commutative involutive two-valued groups and partial classification results in the non-finitely generated and topological cases were obtained. The main result of the present note is the following theorem, which implies that all classification results of the paper~\cite{BVG22} are valid without the assumption of commutativity.

\begin{theorem*}
 Any involutive two-valued group is commutative.
\end{theorem*}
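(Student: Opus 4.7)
Introduce multiplicity notation: for $x,y,z\in X$, let $N(x,y;z)\in\{0,1,2\}$ be the number of occurrences of $z$ in the multiset $x*y$, and set $c_x:=N(x,x;e)\in\{1,2\}$ (well-defined by axiom~(3)). The theorem amounts to proving $N(x,y;z)=N(y,x;z)$ for every triple. The first step is to extract a cyclic symmetry from associativity. Expanding $(x*y)*z=x*(y*z)$ as an identity of $4$-element multisets and reading off the multiplicity of $e$ on both sides---using involutivity $N(w,z;e)=c_z\delta_{w,z}$ and strong identity---yields
\[
c_z\,N(x,y;z)\;=\;c_x\,N(y,z;x)\;=\;c_y\,N(z,x;y).
\]
Thus $T(x,y,z):=c_zN(x,y;z)$ is invariant under cyclic permutations of its arguments; equivalently, $z\in x*y\Leftrightarrow x\in y*z\Leftrightarrow y\in z*x$. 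The theorem reduces to upgrading this cyclic symmetry to full $S_3$-symmetry.

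Next I would recast the problem algebraically. Form the associative unital algebra $A:=k[X]$ with basis $\{e_x\}_{x\in X}$ and product $e_x\cdot e_y:=\tfrac12\sum_z N(x,y;z)\,e_z$. Axioms~(1)--(3) make $A$ a Frobenius algebra with unit $e_e$, algebra homomorphism $\epsilon(e_x)=1$, and invariant nondegenerate symmetric bilinear form $\langle e_x,e_y\rangle:=c_x\delta_{x,y}$ (the invariance $\langle ab,c\rangle=\langle a,bc\rangle$ being exactly the cyclic symmetry of $T$). Commutativity of the two-valued group is equivalent to commutativity of $A$. At this level, the philosophy ``each element is its own inverse, so the inverse of a product is the reverse product of inverses'' becomes the statement that the antipode $e_x\mapsto e_x$ is an anti-homomorphism, which is in turn equivalent to $x*y=y*x$.

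The main step is to prove $A$ commutative. Write $x*y=[a,b]$, $y*x=[c,d]$, $x*x=[e,f_x]$, $y*y=[e,f_y]$, and combine the ``one-variable-repeated'' associativity identities such as $(x*y)*y=x*(y*y)$, which yields $a*y\cup b*y=[x,x]\cup x*f_y$, with the two-bracketing identities for $4$-fold products. For instance $(x*y)*(y*x)=((x*y)*y)*x=x*((y*y)*x)$ expands to
\[
a*c\cup a*d\cup b*c\cup b*d\;=\;[e,e,f_x,f_x]\cup x*(f_y*x),
\]
and symmetrically $(y*x)*(x*y)=[e,e,f_y,f_y]\cup y*(f_x*y)$. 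Together with the six (three types, two swaps) single-repeat identities, this gives a web of multiset constraints linking $x*y$ and $y*x$; the goal is to force $\{a,b\}=\{c,d\}$.

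The main obstacle lies precisely in this final combinatorial reduction. The cyclic symmetry and the natural associativity relations constrain the pair-products $a*c,\dots,b*d$ and their unions, but extracting $\{a,b\}=\{c,d\}$ requires an additional input that I do not see in a purely local way. A cleaner route is likely to construct a ``double cover'' $p\colon G\to X$ by an abelian group $G$ with $X=G/\pm$, using the cyclic symmetry to define consistent lifts of products and involutivity to identify the involution on $G$; commutativity of $G$---and hence of $X$---would then be automatic. Either way, the crux is to extract, from the axioms, a single identity asymmetric enough to distinguish the two $C_3$-orbits inside $S_3$.
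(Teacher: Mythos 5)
You have correctly proved the cyclic symmetry $z\in x*y\Leftrightarrow x\in y*z\Leftrightarrow y\in z*x$ (this is the paper's Lemma~1, in a slightly sharpened multiplicity-weighted form), and you have correctly identified the pivotal identity $(x*y)*(y*x)=x*(y*y)*x$, which is exactly the starting point of the paper's proof. But the proposal then stops at the decisive step, and you say so yourself: the Frobenius-algebra reformulation gains nothing (a Frobenius algebra need not be commutative), and the hoped-for presentation $X=G/\pm$ with $G$ abelian is not available in general --- the classification in~\cite{BVG22} contains involutive two-valued groups that are not of this coset form --- so that route begs the question. There is therefore a genuine gap, and it is precisely the ``single asymmetric identity'' you were looking for.

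The missing input is a count of the multiplicity of $e$ on the two sides of your identity, combined with the notion of order of an element. First dispose of elements of order~$2$: if $x*x=[e,e]$, then expanding $(z*x)*x=z*(x*x)=[z,z,z,z]$ for $z\in x*y$ forces $y*x=[z,z]$, and symmetrically $x*y=[z,z]$ (the paper's Lemma~2), so commutativity holds. Hence assume $x*x=[e,x^2]$ and $y*y=[e,y^2]$ with $x^2\ne e\ne y^2$. Writing $x*y=[z_1,z_2]$ and $y*x=[w_1,w_2]$, the right-hand side $[e,e,x^2,x^2]\cup x*(y^2*x)$ contains $e$ with multiplicity at least~$2$, while by involutivity $e\in z_i*w_j$ iff $z_i=w_j$ (with multiplicity~$2$ iff moreover $\ord z_i=2$). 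This forces either $[z_1,z_2]=[w_1,w_2]$ as multisets (done), or one of two degenerate coincidences: $z_1=z_2=w_1\ne w_2$, or $z_1=w_1$ with $\ord z_1=2$ and $z_1,z_2,w_2$ pairwise distinct. In each degenerate case one first deduces $x^2=y^2$ (in the first case from $x*(y*z_1)=(x*y)*z_1=[e,e,z_1^2,z_1^2]$ and its mirror image; in the second from $(x*z_1)*(z_1*x)=x*(z_1*z_1)*x$ after noting that $x*z_1=z_1*x=[y,y]$ by the order-$2$ lemma), whence $x*(y^2*x)=x*(x^2*x)=[e,x^2,x^2,x^4]$ pushes the multiplicity of $e$ on the right-hand side up to at least~$3$ --- more than the left-hand side can accommodate given the assumed distinctness of the $z_i$ and $w_j$. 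That contradiction closes both degenerate cases; without this counting-and-cases argument the proof is not complete.
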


For single-generated two-valued groups this theorem was proved in~\cite{BVG22}. The following lemma in the commutative case turns into Lemma~2.2 in~\cite{BVG22}; its proof in the general case remains exactly the same.

\begin{lemma}\label{lem_1}
 Suppose that $x$, $y$, and~$z$ are elements of an involutive two-valued group~$X$. Then $z\in x*y$ if and only if $y\in z*x$.
\end{lemma}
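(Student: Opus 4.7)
The plan is to reduce the asymmetric claim $z \in x*y \Longleftrightarrow y \in z*x$ to the symmetric associativity identity $z*(x*y) = (z*x)*y$ by using the identity element~$e$ as a ``detector'' of membership. Once membership in a product can be tested by asking whether $e$ lies in a certain triple product, the two sides of the lemma become the same condition.

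The key observation is that for any $u,v,w \in X$ one has
\begin{equation*}
 w \in u*v \iff e \in w*(u*v).
\end{equation*}
To check this I write $u*v = [a_1,a_2]$, so that by definition the $4$-element multiset $w*(u*v)$ is the multiset sum of $w*a_1$ and $w*a_2$. By involutivity (property~(3)), the multiset $w*a_i$ contains~$e$ if and only if $w = a_i$. Therefore $e$ occurs in $w*(u*v)$ if and only if $w \in \{a_1,a_2\} = u*v$.

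Applying this equivalence once with $(u,v,w) = (x,y,z)$ and once with $(u,v,w) = (z,x,y)$, I obtain
\begin{equation*}
 z \in x*y \iff e \in z*(x*y), \qquad y \in z*x \iff e \in (z*x)*y.
\end{equation*}
Associativity (property~(1)) identifies the two $4$-element multisets $z*(x*y)$ and $(z*x)*y$, so the right-hand conditions coincide, and chaining the two equivalences gives the lemma. I do not anticipate a real obstacle: the only thing to watch is the multiset bookkeeping in the definition of a triple product, but since we only test whether $e$ appears at all, multiplicities play no role and the argument is entirely formal.
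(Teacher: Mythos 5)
Your proposal is correct and is essentially the standard argument (the paper itself defers to Lemma~2.2 of the cited work, whose proof runs on the same two ingredients: associativity plus using $e$ as a membership detector via involutivity). One detail deserves care, precisely because the whole point of the paper is that commutativity is \emph{not} assumed: your stated detector lemma is one-sided, $w\in u*v\iff e\in w*(u*v)$, so applying it with $(u,v,w)=(z,x,y)$ literally yields $y\in z*x\iff e\in y*(z*x)$, not $e\in(z*x)*y$ as you wrote. What you actually need for the second equivalence is the mirror-image version $w\in u*v\iff e\in (u*v)*w$; fortunately this holds by the identical argument, since property~(3) gives $e\in a*w$ if and only if $a=w$, a condition symmetric in the two factors, so $e\in a_i*w\iff e\in w*a_i$. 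With both one-sided versions stated, the chain
\begin{equation*}
 z\in x*y \iff e\in z*(x*y)=(z*x)*y \iff y\in z*x
\end{equation*}
is complete, the middle equality being associativity applied to the ordered triple $(z,x,y)$. So the gap is only notational, but in this non-commutative setting you should say explicitly that the detector works on either side.
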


Of key importance for us will be the concepts of \textit{powers} and \textit{order} of an element of an involutive two-valued group, which were introduced in~\cite{BVG22}. Namely, for each element $x\in X$ there is a unique sequence $x^0=e,\,x^1=x,\,x^2,\,x^3,\,\ldots$ of elements of~$X$ such that  $x^k*x^m=\bigl[x^{|k-m|},x^{k+m}\bigr]$ for all~$k$ and~$m$. The \textit{order} of~$x$ (denoted by~$\ord x$) is the smallest positive integer~$k$ such that $x^k=e$. In particular, the order of a non-identity element~$x$ is equal to~$2$ if and only if $x*x=[e,e]$.

\begin{lemma}\label{lem_2}
Suppose that $x$ and~$y$ are elements of an involutive two-valued group~$X$ such that $\ord x =2$. Then $x*y=y*x=[z,z]$ for certain element $z\in X$.
\end{lemma}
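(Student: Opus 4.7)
The hypothesis $\ord x = 2$ is equivalent to $x*x = [e,e]$. Hence, for every $u \in X$, the strong identity gives
\[
u*(x*x) = u*[e,e] = (u*e) + (u*e) = [u,u,u,u],
\]
where $+$ denotes multiset union. This is the structural fact I intend to exploit via associativity.

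The plan is to start by picking an arbitrary element $a$ of the multiset $x*y$. By Lemma~\ref{lem_1} (with $z=a$), this is equivalent to $y \in a*x$. Writing $a*x = [p,q]$, associativity yields $(a*x)*x = a*(x*x)$; the right-hand side equals $[a,a,a,a]$ by the display above, while the left-hand side is the multiset union $(p*x) + (q*x)$. Since each summand has exactly two elements and only the letter $a$ appears in the total union, both $p*x$ and $q*x$ must equal $[a,a]$. As $y$ is one of $p$ or $q$, this forces $y*x = [a,a]$.

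This delivers the key conclusion: for \emph{every} element $a$ appearing in the multiset $x*y$, one has $y*x = [a,a]$. Since $y*x$ is a single fixed multiset, $x*y$ cannot contain two distinct elements $a \ne b$, for otherwise $[a,a] = y*x = [b,b]$ would be a contradiction. Hence $x*y = [a,a]$ for some $a \in X$, and simultaneously $y*x = [a,a]$; taking $z = a$ completes the proof.

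There is no deep obstacle. The critical move is to compute $(a*x)*x$ for an element $a \in x*y$, rather than take the symmetric option of expanding $x*(x*y) = (x*x)*y = [y,y,y,y]$, which only forces $x*a = x*b = [y,y]$ and does not readily give $a = b$. Once the correct triple product is used, everything reduces to associativity, the strong identity, Lemma~\ref{lem_1}, and elementary multiset bookkeeping; commutativity is never invoked at any step, as the theorem requires.
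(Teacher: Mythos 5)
Your proof is correct. The first half --- picking an arbitrary $a\in x*y$, using Lemma~\ref{lem_1} to place $y$ inside $a*x$, and computing $(a*x)*x=a*(x*x)=a*[e,e]=[a,a,a,a]$ to force $y*x=[a,a]$ --- is exactly the computation in the paper's proof. Where you genuinely diverge is in deducing $x*y=[a,a]$: the paper runs a second, mirror-image computation (another application of Lemma~\ref{lem_1} gives $x*z=[y,y'']$, and then $x*(x*z)=(x*x)*z=[z,z,z,z]$ forces $x*y=[z,z]$), whereas you observe that your first conclusion holds for \emph{every} element $a$ of the multiset $x*y$, so if $x*y$ contained two distinct elements $a\ne b$ you would get $[a,a]=y*x=[b,b]$, a contradiction. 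This uniqueness trick saves the second associativity computation and the second invocation of Lemma~\ref{lem_1}, at the cost of a slightly less symmetric presentation; both routes use only associativity, the strong identity, Lemma~\ref{lem_1}, and the hypothesis $x*x=[e,e]$, and neither assumes commutativity, as required.
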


\begin{proof}
Let $z$ be an element of~$x*y$. By Lemma~\ref{lem_1} the multiset~$z*x$ contains~$y$, that is, $z*x=[y,y']$ for certain~$y'$. Then
$
[y*x,y'*x]=(z*x)*x=z*(x*x)=z*[e,e]=[z,z,z,z].
$
Hence $y*x=[z,z]$. Applying Lemma~\ref{lem_1} again, we obtain that $x*z=[y,y'']$ for certain~$y''$. Then
$
[x*y,x*y'']=x*(x*z)=(x*x)*z=[e,e]*z=[z,z,z,z].
$
Therefore $x*y=[z,z]$.
\end{proof}

\begin{proof}[Proof of Theorem]
Let $x$ and~$y$ be elements of an involutive two-valued group~$X$. We need to prove that  $x*y=y*x$. If one of the two elements~$x$ and~$y$ has order~$2$, the required equality follows from Lemma~\ref{lem_2}. So we may assume that $\ord x>2$ and $\ord y>2$, that is,  $x^2\ne e$ and $y^2\ne e$.
Put $x*y=[z_1,z_2]$ and $y*x=[w_1,w_2]$.
We have the following chain of equalities of $8$-element multisets:
 \begin{multline}\label{eq_main}
  [z_1*w_1,z_1*w_2,z_2*w_1,z_2*w_2]=(x*y)*(y*x)\\{}=x*(y*y)*x=x*\bigl[e,y^2\bigr]*x=\bigl[e,e,x^2,x^2,x*y^2*x\bigr].
 \end{multline}
 Hence either at least two  of the four multisets~$z_i*w_j$ contain~$e$ or one of these four multisets is equal to~$[e,e]$. From the involutivity it follows that  $e\in z_i*w_j$ if and only if $z_i=w_j$. Up to permutations $z_1\leftrightarrow z_2$ and $w_1\leftrightarrow w_2$ and the reversal of the roles of the elements~$x$ and~$y$, there are three substantially different cases.
\smallskip

\textsl{Case 1: $z_1=w_1$ and $z_2=w_2$.} Then we arrive at the required equality~$x*y=y*x$.
\smallskip

\textsl{Case 2: $z_1=z_2=w_1\ne w_2$.} We have
\begin{align}
\label{eq_xyz}
x*(y*z_1)&=(x*y)*z_1=[z_1,z_1]*z_1=\bigl[e,e,z_1^2,z_1^2\bigr],\\
\label{eq_zxy}
(z_1*x)*y&=z_1*(x*y)=z_1*[z_1,z_1]=\bigl[e,e,z_1^2,z_1^2\bigr].
\end{align}
Since $\ord x>2$, we have that $x*x=[e,x^2]$, where $x^2\ne e$. Moreover, by the involutivity $e\notin x*x'$ unless $x'=x$. Hence, from~\eqref{eq_xyz} it follows that $y*z_1=[x,x]$ and $x^2=z_1^2$. Similarly, from~\eqref{eq_zxy} it follows that $z_1*x=[y,y]$ and $y^2=z_1^2$. Therefore $x^2=y^2$. Consequently,
\begin{gather}
 z_1*w_1=z_2*w_1=z_1*z_1=\bigl[e,z_1^2\bigr]=\bigl[e,x^2\bigr],\nonumber\\
 \label{eq_vyklad}
 x*y^2*x=x*x^2*x=\bigl[x,x^3\bigr]*x=\bigl[e,x^2,x^2,x^4\bigr].
\end{gather}
Thus, \eqref{eq_main} reads as
\begin{equation*}
\bigl[e,e,x^2,x^2,z_1*w_2,z_1*w_2\bigr]=\bigl[e,e,e,x^2,x^2,x^2,x^2,x^4\bigr].
\end{equation*}
Since $x^2\ne e$, we obtain that $e\in z_1*w_2$, which leads to a contradiction, since $z_1\ne w_2$. Therefore, the  case under consideration is impossible.
\smallskip

\textsl{Case 3: $z_1*w_1=[e,e]$, that is, $z_1=w_1$ and $\ord z_1=2$.} We may assume that the elements~$z_1$, $z_2$, and~$w_2$ are pairwise different, since otherwise we get into one of the two already considered Cases~1 and~2. By Lemma~\ref{lem_1} we obtain that $y\in z_1*x$. Since $\ord z_1=2$, from Lemma~\ref{lem_2} it follows that $x*z_1=z_1*x=[y,y]$. We have
\begin{gather*}
 (x*z_1)*(z_1*x)=[y,y]*[y,y]=\bigl[e,e,e,e,y^2,y^2,y^2,y^2\bigr],\\
 x*(z_1*z_1)*x=x*[e,e]*x=\bigl[e,e,e,e,x^2,x^2,x^2,x^2\bigr]
\end{gather*}
and hence $x^2=y^2$. As in Case~2, from~\eqref{eq_vyklad} it follows that the $e$ enters the multiset~\eqref{eq_main} with multiplicity at least three. Hence, $e$ is contained in at least one of the three multisets~$z_1*w_2$, $z_2*w_1=z_2*z_1$, and $z_2*w_2$, which is impossible, since the elements~$z_1$, $z_2$, and~$w_2$ are pairwise different. The obtained contradiction completes the proof of the theorem.
\end{proof}

The author is grateful to V.~M.~Buchstaber and A.~P.~Veselov for multiple fruitful discussions.

\end{document}